\begin{document}

\large


\theoremstyle{plain}
\newtheorem{theorem}{Theorem}[section]

\newtheorem{prop}[theorem]{Proposition}

\newtheorem{lemma}{Lemma}[section]
\newtheorem{corol}{Corollary}[theorem]

\theoremstyle{definition}
\newtheorem{definition}{Definition}[section]
\newtheorem{remark}{\textnormal{\textbf{Remark}}}
\newtheorem{proposition}{\textnormal{\textbf{Proposition}}}
\newtheorem*{acknowledgement}{\textnormal{\textbf{Acknowledgement}}}
\theoremstyle{remark}
\newtheorem{example}{Example}
\numberwithin{equation}{section}

\title[When degree of roughness is a neighborhood over locally solid Riesz spaces]{When degree of roughness is a neighborhood over locally solid Riesz spaces}

\author[Sanjoy Ghosal and Sourav Mandal]{Sanjoy Ghosal and Sourav Mandal}

\thanks{Department of Mathematics, University of North Bengal, Raja Rammohunpur, Darjeeling-734013, West Bengal, India.}
 \thanks{\emph{E-mail}: sanjoykumarghosal@nbu.ac.in; sanjoyghosalju@gmail.com (S. Ghosal); sourav7478@gmail.com (S. Mandal)}
\date{}

\subjclass[2010]{Primary 40A35, 46A40; Secondary 46B40}

\keywords{Degree of roughness, locally solid Riesz space, rough weighted $\mathcal{I}_\tau$-convergence,  weighted $\mathcal{I}_\tau$-cluster point.}
\date{}

\thanks{Research of the second author is supported by UGC Research, HRDG, India.}

\setcounter {page}{1}

\maketitle

\begin{abstract}
In this paper we introduce the notion of rough weighted $\mathcal{I}_\tau$-limit points set and weighted $\mathcal{I}_\tau$-cluster points set in a locally solid Riesz space which are more generalized version of rough weighted $\mathcal{I}$-limit points set and weighted $\mathcal{I}$-cluster points set in a $\theta$-metric space respectively. Successively to compare with the following important results of Fridy [Proc. Amer. Math. Soc. {118} (4) (1993), 1187-1192] and Das [Topology Appl. {159} (10-11) (2012), 2621-2626], respectively be stated as
\begin{description}
  \item[(i)] Any number sequence $x=\{x_{n}\}_{n\in \mathbb{N}},$ the statistical cluster points set of $x$ is closed,

  \item[(ii)] In a topological space the $\mathcal{I}$-cluster points set is closed,
\end{description}
 we show that in general, the weighted $\mathcal{I}_\tau$-cluster points set in a locally solid Riesz space may not be closed. The resulting summability method unfollows some previous results in the direction of research works of Aytar [Numer. Funct. Anal. Optim. {29} (3-4) (2008) 291-303], D$\ddot{\mbox{u}}$ndar [Numer. Funct. Anal. Optim. {37} (4) (2016) 480-491], Ghosal [Math. Slovaca {70} (3) (2020)  667-680] and Sava\c{s}, Et [Period. Math. Hungar.  71 (2015) 135-145].

\end{abstract}

\section{{Introduction}}

The idea of convergence of a sequence in a norm linear space $(X, ||\textbf{.}||)$ had been extended to rough convergence first by Phu \cite{Phu1} as follows: Let $r$ be a \emph{\textbf{non-negative real number}}, a sequence  $x=\{x_n\}_{n\in \mathbb{N}}$ in $X$ is said to be rough convergent to $x_*$ w.r.t the roughness of degree $r,$ denoted by $x_n\xrightarrow{r}x_*$  provided that
$$~\mbox{for any}~ \varepsilon > 0~~\exists~ n_{\varepsilon} \in \mathbb{N}: n\geq n_{\varepsilon} ~\Rightarrow~ ||x_n - x_*|| \leq  r+ \varepsilon.$$ The set $LIM^r x=\left\{x_*\in X: x_n\xrightarrow{r}x_*\right\}$ is called the $r$-limit set of the sequence $x=\{x_n\}_{n\in \mathbb{N}}.$ Phu studied the set $LIM^r x$ of all such points and showed that this set is bounded, closed and convex.\\

 In the year 2013, the idea of rough $\mathcal{I}$-convergence was introduced by Pal et al. \cite{Pal} as a generalization of rough convergence \cite{Phu1,Phu2}, statistical convergence \cite{Fast,St}, rough statistical convergence \cite{Aytar2} and $\mathcal{I}$-convergence \cite{KMSS0,KSW} which is based on the structure of the ideal $\mathcal{I}$ of subsets of the set $\mathbb{N}$ as: Let $r$ be a \emph{\textbf{non-negative real number}}. A sequence  $x=\{x_n\}_{n\in \mathbb{N}}$ in $X$ is said to be rough $\mathcal{I}$-convergent to $x_*,$ denoted by $x_n\xrightarrow[r]{\mathcal{I}}x_*,$ provided for any $\varepsilon>0$ the set
 $$\left\{n\in \mathbb{N}: ||x_n-x_*||\geq r+\varepsilon \right\}\in \mathcal{I}.$$  The basic properties of this interesting concept were studied by Pal et al. \cite{Pal} in an arbitrary norm linear space.  We could follow references \cite{Aytar1,Dundar2016,F1,KMSS,Salat,Savas} related to the concepts of statistical convergence, rough convergence and others.\\

  One of the most impressive generalizations of the notion of rough $\mathcal{I}$-convergence is the concept rough weighted $\mathcal{I}$-convergence. Motivated from the definitions of rough weighted statistical limit points set and weighted statistical cluster points set \cite{DGGS}, recently  Ghosal et al. \cite{Ghosal2020} introduced the notion of rough weighted $\mathcal{I}$-limit points set and  weighted $\mathcal{I}$-cluster points set on $\theta$-metric space $(X,d_\theta)$ by using the weighted sequence of real numbers $\{t_{n}\}_{n\in \mathbb{N}}$ (i.e., $t_{n}>\delta,$ for all $n\in \mathbb{N}$ for some positive real number $\delta$) as follows:

\begin{definition}\cite{Ghosal2020}    Let $r$ be a \emph{\textbf{non-negative real number}} and $\{t_{n}\}_{n\in \mathbb{N}}$ be a weighted sequence. A sequence $x=\{x_{n}\}_{n\in \mathbb{N}}$ in a $\theta$-metric space $X$ is said to be rough weighted $\mathcal{I}$-convergent to $x_* \in X$  w.r.t the roughness of degree $r$ if for every $\varepsilon>0$, $$\left \{n\in \mathbb{N}: t_nd_\theta(x_{n},x_*)\geq r+\varepsilon \right\}\in \mathcal{I}.$$ In this case we write $x_{n}\xrightarrow[r]{W\mathcal{I}} x_*.$  The set $W\mathcal{I}-LIM^{r}x=\left\{x_*\in X: x_{n}\xrightarrow[r]{W\mathcal{I}} x_*\right\}$ is called the rough weighted $\mathcal{I}$-limit set of the sequence $x=\{x_{n}\}_{n\in \mathbb{N}}$ with degree of roughness $r.$ The sequence $x=\{x_{n}\}_{n\in \mathbb{N}}$ is said to be rough weighted $\mathcal{I}$-convergent provided that $W\mathcal{I}-LIM^{r}x\neq \varnothing.$ Visit \cite{Karakaya,LR2} for more references related this topic. \end{definition}

\begin{definition}\cite{Ghosal2020} Let $\{t_{n}\}_{n\in \mathbb{N}}$ be a weighted sequence and $c^{*} \in X$ is called a weighted $\mathcal{I}$-cluster point of a sequence $x=\{x_n\}_{n\in \mathbb{N}}$ in a $\theta$-metric space $X$ if for every $\varepsilon>0,$
$$\{n\in \mathbb{N}: t_n d_\theta(x_n,c^{*})<\varepsilon\}\notin \mathcal{I}.$$
We denote the set of all weighted $\mathcal{I}$-cluster points of the sequence $x=\{x_n\}_{n\in \mathbb{N}}$ by $ W\mathcal{I}(\Gamma_x).$
\end{definition}

The notion of Riesz space was first introduced by Riesz \cite{Riesz} in 1928 and since then it has found several applications in measure theory, operator theory, optimization. It is well known that a topology on a vector space that makes the operations of addition and scalar multiplication continuous is called a linear topology and a vector space endowed with a linear topology is called a topological vector space. A Riesz space is an ordered vector space which is also a lattice, endowed with a linear topology. Further if it has a base consisting of solid sets at zero then it is known as a locally solid Riesz space. We briefly recall some of the basic notions in the theory of Riesz space and we refer readers to \cite{AA,Luxemburg,Roberts,Savas0} for more details.


\begin{definition} \cite{AP} Let $L$ be a real vector space and $\leq$ be a partial order on this space. $L$ is said to be an ordered vector space if it satisfies the following properties:\\
(i) If $x, y \in L$ and $y \leq x,$ then $y + z \leq x+ z$ for each $z\in L.$\\
(ii) If $x, y\in L$ and $y \leq x,$ then $\lambda y \leq \lambda x$ for each $\lambda \geq 0.$ \end{definition}

In addition, if $L$ is a lattice with respect to the partial ordering, $L$ is said to be a Riesz space (or a vector lattice).

For an element $x$ in a Riesz space $L$  the positive and negative parts of $x$ are defined by $x^{+} = x\vee \theta$ and $x^{-} = (-x)\vee\theta$ respectively.
The absolute value of $x$ by $|x| = x \vee(-x),$ where $\theta$ is the element zero of $L.$

A subset $S$ of a Riesz space $L$ is said to be solid if $y\in S$ and $|x| \leq |y|$ imply $x\in S.$

A topology $\tau$ on a real vector space $L$ that makes the addition and the scalar multiplication continuous is said to be a
linear topology, that is, the topology $\tau$ makes the functions
$$(x, y) \rightarrow x+ y~ (\mbox{from}~ (L\times L, \tau \times \tau)\rightarrow (L,\tau)),$$
$$(\lambda, x) \rightarrow \lambda x ~(\mbox{from}~ (\mathbb{R}\times L,\sigma \times \tau) \rightarrow (L,\tau))$$
continuous, where $\sigma$ is the usual topology on  $\mathbb{R}.$ In this case, the pair $(L, \tau)$ is called a topological vector space.

Every linear topology $\tau$ on a vector space $L$ has a base $\mathcal{N}$ for the neighborhoods of $\theta$ (zero) satisfying the following
properties:\\
(a) Each $V\in\mathcal{N}$ is a balanced set, that is, $\lambda x\in V$ holds for all $x \in V$ and every $\lambda \in \mathbb{R}$ with $|\lambda| \leq 1.$\\
(b) Each $V\in\mathcal{N}$ is an absorbing set, that is, for every $x\in L,$ there exists a $\lambda>0$ such that $\lambda x\in V. $\\
(c) For each $V\in \mathcal{N}$ there exists some $W\in\mathcal{N}$ with $W + W \subseteq V.$


 \begin{definition} \cite{AP} A linear topology $\tau$ on a Riesz space $L$ is said to be locally solid if $\tau$ has a base at zero consisting of solid sets. A locally solid Riesz space $(L, \tau)$ is a Riesz space $L$ equipped with a locally solid topology $\tau.$\end{definition}

The symbol $\mathcal{N}_{sol}$ will stand for a base at zero consisting of solid sets and satisfying the properties (a), (b) and (c) in a locally solid topology.


\begin{definition}  \cite{Hong} A subset $A$ of a Riesz space $(L,\tau)$ is said to be topologically bounded or $\tau$-bounded if for every neighborhood $U$ of zero there exists some $\lambda>0$ such that $A\subset \lambda U.$ If $A$ is not $\tau$-bounded then it is called $\tau$-unbounded.\end{definition}


\begin{definition} \cite{Hong} A subset $B$ of a Riesz space $(L,\tau)$ is said to be order bounded if it is contained in some order interval. \end{definition}


\begin{definition} \cite{Das} A sequence $\{x_n\}$ in a topological space $(X, \tau)$ is said to be $\mathcal{I}$-convergent to $x\in X$ if for any $U\in \tau$ containing $x,$ $\{n\in \mathbb{N}:~ x_n\notin U\}\in \mathcal{I}.$
\end{definition}

Naturally a prominent question may arise that, does there exists any notion of convergence in a topological vector space which could transform the core factor \emph{\textbf{degree of roughness}} ~`$r$' to a \emph{\textbf{neighborhood}} ~`$V$' of a topological vector space $(X, \tau).$ For answering this question, a different aspect of rough weighted $\mathcal{I}$-convergence been prominently discussed, following the concepts of Definition 1.1 and 1.2, specifically by replacing ~`$\theta$-metric space $X$', ~`$r$' and ~`$\varepsilon$' by the ~`locally solid Riesz space $L$', ~`$V$' (where $V$ is a fixed $\tau$-neighborhood of the zero element of $L$) and ~`$U$' (where $U$ is any arbitrary $\tau$-neighborhood of the zero element of $L$) respectively. We introduce the following definitions:

 \begin{definition} Let $\mathcal{I}$ be an admissible ideal of $\mathbb{N}$ and $t=\{t_{n}\}_{n\in \mathbb{N}}$ be a weighted sequence of real numbers. A sequence $x=\{x_{n}\}$ in a locally solid Riesz space $(L,\tau)$ is said to be rough weighted $\mathcal{I}_\tau$-convergent to $x_*\in L$ w.r.t the roughness of degree $V$ (where $V$ is a $\tau$-neighborhood of $\theta$) if for every $\tau$-neighborhood $U$ of $\theta,$ denoted by $x_{n}\xrightarrow[V]{W\mathcal{I}} x_*,$  the following expression holds,
  $$\{n\in \mathbb{N}: t_n(x_n-x_*)\notin V+U\}\in \mathcal{I}.$$
 We shall write $W\mathcal{I}_{\tau}-LIM^Vx=\{x_*\in X: x_{n}\xrightarrow[V]{W\mathcal{I}} x_*\}$ to denote the set of all rough weighted $\mathcal{I}$-limit points of the sequences $x=\{x_{n}\}$ with degree of roughness $V.$
  \end{definition}

\begin{definition} An element $c\in L$ is called weighted $\mathcal{I}_{\tau}$-cluster point of a sequence $x=\{x_n\}$ in $L$ for every $\tau$-neighborhood $U$ of $\theta,$ denoted by $x_{n}\xrightarrow{W_{\mathcal{I}}\Gamma_x^\tau} c,$ the set
$$\{n\in \mathbb{N}: t_n(x_n-c)\in U\}\notin \mathcal{I}.$$
The set of all weighted $\mathcal{I}_\tau$-cluster points of the sequence $x=\{x_n\}_{n\in \mathbb{N}}$ is denoted by $W_{\mathcal{I}}\Gamma_x^\tau.$
\end{definition}

Our main objective is to interpret the topological structure of the new convergence and characterize the rough weighted $\mathcal{I}_\tau$-limit set and weighted $\mathcal{I}_\tau$-cluster points set in a locally solid Riesz space. In addition, we give some results about the relationship between the sets $W\mathcal{I}_\tau-LIM^V x$ and  $W_\mathcal{I}\Gamma^\tau_x.$\\

\section{{Main Results }}

Followed by the Definition 1.8 of rough weighted $\mathcal{I}_\tau$-convergence over locally solid Riesz spaces we give the necessary condition for $W\mathcal{I}_{\tau}-LIM^Vx$ to be convex.

\begin{theorem} If $V$ is convex then the set $W\mathcal{I}_{\tau}-LIM^Vx$ is convex. \end{theorem}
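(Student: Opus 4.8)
The plan is to show that if $y_1, y_2 \in W\mathcal{I}_{\tau}-LIM^Vx$ and $\lambda \in [0,1]$, then $\lambda y_1 + (1-\lambda) y_2 \in W\mathcal{I}_{\tau}-LIM^Vx$. Fix an arbitrary $\tau$-neighborhood $U$ of $\theta$. Using property (c) of the base $\mathcal{N}$, choose $W \in \mathcal{N}_{sol}$ with $W + W \subseteq U$. By hypothesis the sets $A_1 = \{n \in \mathbb{N} : t_n(x_n - y_1) \notin V + W\}$ and $A_2 = \{n \in \mathbb{N} : t_n(x_n - y_2) \notin V + W\}$ both belong to $\mathcal{I}$, hence $A_1 \cup A_2 \in \mathcal{I}$. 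The goal is to prove that $\{n \in \mathbb{N} : t_n(x_n - (\lambda y_1 + (1-\lambda)y_2)) \notin V + U\} \subseteq A_1 \cup A_2$, which by the ideal property finishes the argument.

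For the inclusion, suppose $n \notin A_1 \cup A_2$, so there exist $v_1, v_2 \in V$ and $w_1, w_2 \in W$ with $t_n(x_n - y_1) = v_1 + w_1$ and $t_n(x_n - y_2) = v_2 + w_2$. Then I would write
\[
t_n\bigl(x_n - (\lambda y_1 + (1-\lambda) y_2)\bigr) = \lambda\, t_n(x_n - y_1) + (1-\lambda)\, t_n(x_n - y_2) = \bigl(\lambda v_1 + (1-\lambda) v_2\bigr) + \bigl(\lambda w_1 + (1-\lambda) w_2\bigr).
\]
Since $V$ is convex, $\lambda v_1 + (1-\lambda) v_2 \in V$. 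For the second summand, I would argue $\lambda w_1 + (1-\lambda) w_2 \in W + W \subseteq U$: because $W$ is balanced, $\lambda w_1 \in W$ and $(1-\lambda) w_2 \in W$ (as $|\lambda|, |1-\lambda| \le 1$), so their sum lies in $W + W$. Therefore $t_n(x_n - (\lambda y_1 + (1-\lambda)y_2)) \in V + U$, i.e. $n$ is not in the exceptional set, establishing the inclusion.

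The main subtlety is making sure the scalar multiples stay inside the right sets: this is exactly where one needs $V$ convex (for $\lambda v_1 + (1-\lambda)v_2 \in V$) and where one leans on the balancedness of the base neighborhood $W$ to absorb the scalars $\lambda$ and $1-\lambda$ into $W$ rather than into $V$. A secondary point is the harmless identity $t_n(x_n - (\lambda y_1 + (1-\lambda)y_2)) = \lambda t_n(x_n - y_1) + (1-\lambda) t_n(x_n - y_2)$, which holds since $\lambda + (1-\lambda) = 1$ and scalar multiplication distributes in the vector space $L$. Once these are in place, the ideal axioms (a subset of a member of $\mathcal{I}$ is a member, and $\mathcal{I}$ is closed under finite unions) complete the proof with no further work.
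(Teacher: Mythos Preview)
Your proof is correct and follows essentially the same approach as the paper: both pick $W\in\mathcal{N}_{sol}$ with $W+W\subseteq U$, use convexity of $V$ to handle $\lambda v_1+(1-\lambda)v_2$, and use balancedness of $W$ to get $\lambda w_1+(1-\lambda)w_2\in W+W\subseteq U$. The only cosmetic difference is that the paper phrases things via the filter $\mathcal{F}(\mathcal{I})$ and intersections, while you work directly with the ideal and unions of the exceptional sets.
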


\begin{proof} Let $U$ be an arbitrary $\tau$-neighborhood of zero. Then there exists a $W\in \mathcal{N}_{sol}$ such that $W+W\subseteq U.$ Since $W$ is a balanced set, we get $\lambda W\subseteq W$ and $(1-\lambda) W\subseteq W$ for all $0<\lambda <1.$ If $x_*,y_*\in W\mathcal{I}_\tau-LIM^{V}x$ then we have $A=\{k\in \mathbb{N}: t_k(x_k-x_*)\in V+W\}\in \mathcal{F}(\mathcal{I}),$ $B=\{k\in \mathbb{N}: t_k(x_k-y_*)\in V+W\}\in \mathcal{F(\mathcal{I})},$ where $\mathcal{F(\mathcal{I})}=\{M\subseteq \mathbb{N}: \mathbb{N}\setminus M \in \mathcal{I}\}$ is a filter associate with the ideal $\mathcal{I}.$ Consequently $A\cap B\in \mathcal{F(\mathcal{I})}.$  Further let $k\in A\cap B.$ Thus
  \begin{multline*}t_k[x_k-\{\lambda x_*+(1-\lambda)y_*\}]=\lambda t_k(x_k-x_*)+(1-\lambda) t_k(x_k-y_*)\\
  \in \lambda (V+W)+ (1-\lambda)(V+W)\subseteq V+W+W \subseteq V+U.\end{multline*}

  This implies $A\cap B \subseteq \{k\in \mathbb{N}: t_k[x_k-\{\lambda x_*+(1-\lambda)y_*\}]\in V+U\}$. So it follows that $W\mathcal{I}_{\tau}-LIM^Vx$ is convex.
\end{proof}

The converse of Theorem 2.1 is not true in general. To prove this important fact, we consider an example as follows:


 \begin{example}
 Let $\mathcal{I}$ is the ideal of subsets of $\mathbb{N}$ of natural density zero. Let us consider the locally solid Riesz Space $(\mathbb{R}^2,||\cdot||)$ with the max norm $||\cdot||$ and coordinate-wise ordering. The family $\mathcal{N}_{sol}$ of all $U(\varepsilon)$ defined as $U(\varepsilon)=\{\alpha\in \mathbb{R}^2 :||\alpha||<\varepsilon\}$ where $\varepsilon>0,$ constitutes a base at $\theta=(0,0).$ Let us define the sequence $x=\{x_{n}\}_{n\in \mathbb{N}},$ the weighted sequence $t=\{t_{n}\}_{n\in \mathbb{N}}$ and $\tau$-neighborhood of $\theta,$  say $V,$ in the following manner:
    \begin{equation*}
     x_{n}=\begin{cases} (1,0)&\mbox {if} ~ n\neq m^{2}~ \mbox{for all}~ m\in \mathbb{N}, \\
          (n,n)&\mbox{otherwise},
          \end{cases}\end{equation*}
          \[t_{n}= n~\mbox{ for all}~ n\in \mathbb{N},~\mbox{and}~ V=\{(\xi,\eta)\in \mathbb{R}^2: \parallel (\xi,\eta)\parallel\leq 1\}\cup \{(\xi,\eta)\in \mathbb{R}^2: \xi=\eta\}.\] Therefore $ W\mathcal{I}_\tau-LIM^{V}x= \{(\xi,\eta)\in \mathbb{R}^2: \xi-\eta=1\}$ is a convex set but not $V.$ $\Box$ \end{example}

Eliminating the condition of convexity from the aforementioned theorem, then the result may not be true.


\begin{example} Consider the locally solid Riesz Space $(\mathbb{R}^2,||\cdot||),$ where $||(\xi, \eta)||=|\xi|+|\eta|$ for $(\xi, \eta)\in \mathbb{R}^2$ and coordinate-wise ordering. The family $\mathcal{N}_{sol}$ of all $U(\varepsilon)$ defined as $U(\varepsilon)=\{\alpha\in \mathbb{R}^2 :||\alpha||<\varepsilon\}$ where $\varepsilon>0,$ constitutes a base at $\theta=(0,0).$ Consider the similar sequence as in Example 1, weighted sequence $t_{n}=e$ for all $n\in \mathbb{N}$ and $V=\{(\xi,\eta)\in \mathbb{R}^2: ||(\xi, \eta)||\leq \pi\}\cup \{(\xi,\eta)\in \mathbb{R}^2: \eta=\sin\xi\}.$ Taking the similar ideal as in example 1, we get $W\mathcal{I}_\tau-LIM^{V}x= \{(\xi,\eta)\in \mathbb{R}^2: |1-\xi|+|\eta|\leq\pi\}\cup \{(\xi,\eta)\in \mathbb{R}^2: e\eta=\sin (e\xi-e)\},$ which is not a convex set. So we conclude that if $V$ is not a convex set then the set $W\mathcal{I}_{\tau}-LIM^Vx$ may or may not be convex.
\end{example}

Ghosal et al. \cite[Theorem 3.1]{Ghosal2020}, had shown that if the weighted sequence is not $\mathcal{I}$-bounded the the set $W\mathcal{I}-LIM^{r}x$ contains at most one element. While reformulating the above theorem based over locally solid Riesz Space, the object of~``$W\mathcal{I}-LIM^{r}x$ contains at most one element" violates. We exemplify this assertion below i.e., if the weighted sequence is not $\mathcal{I}$-bounded then the set $W\mathcal{I}_{\tau}-LIM^Vx$ may not be singleton infact it may be infinite and $\tau$-unbounded.


\begin{example} Consider the ideal $\mathcal{I}=\{A\subset \mathbb{N}: \displaystyle \sum_{a\in A} a^{-1}<\infty\}$ and the locally solid Riesz Space $(\mathbb{R}^2,||\cdot||),$ where $||\cdot||$ is the Euclidean norm. Let us define the sequence $x=\{x_{n}\}_{n\in \mathbb{N}}$ and the weighted sequence $t=\{t_{n}\}_{n\in \mathbb{N}}$ in the following manner:\\
    \begin{equation*}
     x_{n}=\begin{cases} ((-1)^{n},0)  &\mbox {if} ~ n\neq m^{p}~ \mbox{for all}~ m\in \mathbb{N} ~\mbox{and}~ p\in \mathbb{N}\setminus\{1\}, \\
          (n,0)&\mbox{otherwise},
          \end{cases}
          ~\mbox{and}~
     t_{n}= n~ \mbox{for all}~ n\in \mathbb{N}.\\
          \end{equation*}
   It is very obvious that the sequence $x=\{x_n\}_{n\in \mathbb{N}}$ is not weighted $\mathcal{I}_\tau$-convergent to any point of $\mathbb{R}^2.$
   Let $V=\{(\xi,\eta)\in \mathbb{R}^2 :-1<\eta<1\}.$ Then $W\mathcal{I}_\tau-LIM^{V}x=\{(\xi,\eta)\in \mathbb{R}^2 :\eta=0\}.$  This example shows that the sequence $\{x_{n}\}_{n\in \mathbb{N}}$ is not weighted $\mathcal{I}_\tau$-convergent to any point but the rough weighted $\mathcal{I}_\tau$-limit set is $\{(\xi,\eta)\in \mathbb{R}^2 :\eta=0\}$ which is infinite and $\tau$-unbounded. $\Box$ \end{example}


\begin{theorem} If the weighted sequence $t=\{t_n\}_{n\in \mathbb{N}}$ is not $\mathcal{I}$-bounded and $V$ be $\tau$-bounded then the rough weighted $\mathcal{I}_\tau$-limit set $W\mathcal{I}_\tau-LIM^{V}x$ of a sequence $x=\{x_n\}_{n\in \mathbb{N}}$ in locally solid Hausdorff Riesz spaces can have at most one element. \end{theorem}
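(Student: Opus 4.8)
The plan is to argue by contradiction, exploiting that a subsequence of weights escaping to infinity along a set outside $\mathcal{I}$ forces any two candidate rough limits to coincide. Suppose $x_*,y_*\in W\mathcal{I}_\tau-LIM^{V}x$ with $x_*\neq y_*$, and set $z=x_*-y_*\neq\theta$. Since $(L,\tau)$ is Hausdorff, $\{\theta\}$ is closed, so there is a $\tau$-neighborhood $W_0$ of $\theta$ with $z\notin W_0$. Applying property (c) of $\mathcal{N}_{sol}$ twice I would pick $U\in\mathcal{N}_{sol}$ with $U+U+U\subseteq W_0$; thus $z\notin U+U+U$, and I record that $U$ is balanced.

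Next I would unwind the two membership relations at this fixed $U$. By Definition 1.8 the sets $A=\{k\in\mathbb{N}:t_k(x_k-x_*)\in V+U\}$ and $B=\{k\in\mathbb{N}:t_k(x_k-y_*)\in V+U\}$ both lie in $\mathcal{F}(\mathcal{I})$, hence so does $A\cap B$. For $k\in A\cap B$, subtracting the two relations gives $t_kz\in (V+U)-(V+U)=(V-V)+(U-U)=(V-V)+(U+U)$, the last step using $-U=U$. Since the weights satisfy $t_k>\delta>0$ we may divide by $t_k$, obtaining $z\in\frac{1}{t_k}(V-V)+\frac{1}{t_k}(U+U)$.

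The hypothesis that $V$ is $\tau$-bounded now enters. The set $D:=V-V$ is $\tau$-bounded (a difference of two $\tau$-bounded sets is $\tau$-bounded), so there is $\mu\geq 1$ with $D\subseteq\mu U$. Consequently, whenever $t_k>\mu$ we get $\frac{1}{t_k}(V-V)\subseteq\frac{\mu}{t_k}U\subseteq U$ and $\frac{1}{t_k}(U+U)\subseteq U+U$, both by balancedness of $U$. Finally, since $\{t_n\}_{n\in\mathbb{N}}$ is not $\mathcal{I}$-bounded, $\{k\in\mathbb{N}:t_k>\mu\}\notin\mathcal{I}$; intersecting this with $A\cap B\in\mathcal{F}(\mathcal{I})$ yields a set not in $\mathcal{I}$, so in particular there is an index $k$ with $k\in A\cap B$ and $t_k>\mu$. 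For this $k$ the inclusion above gives $z\in U+U+U\subseteq W_0$, contradicting $z\notin W_0$. Hence $W\mathcal{I}_\tau-LIM^{V}x$ has at most one element.

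I expect the main obstacle to be the bookkeeping in the third paragraph: one must simultaneously use $\tau$-boundedness of $V-V$ to shrink its scaled copy into $U$, keep the scaled copy of $U+U$ inside $U+U$, and arrange \emph{in advance}, via the two applications of property (c), that $U+U+U$ is already contained in the Hausdorff-separating neighborhood $W_0$. Two minor but essential points used without fuss are the elementary ideal fact that a set outside $\mathcal{I}$ meets every member of $\mathcal{F}(\mathcal{I})$ in a set outside $\mathcal{I}$ (so the index $k$ genuinely exists), and the observation that the Hausdorff property is invoked exactly once, namely to separate $z\neq\theta$ from $\theta$.
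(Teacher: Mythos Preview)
Your proof is correct and follows essentially the same strategy as the paper's: argue by contradiction, use the Hausdorff property to separate $z=x_*-y_*$ from $\theta$, combine the two rough-limit memberships on a set in $\mathcal{F}(\mathcal{I})$, invoke $\tau$-boundedness of a set built from $V$, and pick an index with large weight via the non-$\mathcal{I}$-boundedness hypothesis to force $z$ into the separating neighborhood. The only cosmetic differences are that the paper traps $t_k z$ in $V+V+V\subset pU$ (using an auxiliary $W$ with $W+W\subset V$) and argues $t_k\alpha\in t_kU$, whereas you work with $V-V$, divide by $t_k$, and land $z$ in $U+U+U$; both are equivalent bookkeeping choices.
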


\begin{proof} If $W\mathcal{I}_\tau-LIM^{V}x= \varnothing,$ the theorem is obvious. So assuming  $W\mathcal{I}_\tau-LIM^{V}x\neq \varnothing.$ If possible let $x_*$ and $y_*$ be two distinct elements in $W\mathcal{I}_\tau-LIM^{V}x.$ Let $\alpha=x_*-y_*.$ If $\alpha\neq \theta$ then there exists $U\in \mathcal{N}_{sol}$ such that $\theta \in U$ but $\alpha \notin U$ (since the Riesz space is $T_2$).

  Now $V$ is $\tau$-bounded so is $B=V+V+V.$ So there exists a positive real number $p$ such that $B\subset pU.$ Let $A_1=\{k\in \mathbb{N}: t_k\geq p\}.$ Since $t=\{t_n\}_{n\in \mathbb{N}}$ is not $\mathcal{I}$-bounded so  $A_1\notin \mathcal{I}.$ Since $V$ is a $\tau$-neighborhood of $\theta$ so there exists $W\in \mathcal{N}_{sol}$ such that $W+W\subset V.$ Let
 $$A_2=\{k\in \mathbb{N}: t_k(x_k-x_*)\in V+W\}~\mbox{and}~A_3=\{k\in \mathbb{N}: t_k(x_k-y_*)\in V+W\},$$
 then $A_2, A_3\in \mathcal{F(\mathcal{I})}.$ Let $A=A_1 \cap A_2\cap A_3$ then $A\neq \varnothing$ as well as $A$ is an infinite subset of $\mathbb{N}.$ For $k\in A,$
$$t_k(x_*-y_*)=t_k(x_*-x_k)+t_k(x_k-y_*)\in V+W+V+W= V+V+W+W\subseteq B.$$
Therefore $A\subseteq \{k\in \mathbb{N}: t_k\alpha\in B\}$ and so $\{k\in \mathbb{N}: t_k\alpha\in B\}\neq \varnothing,$ i.e., $t_k\alpha\in B$ for all $k\in A.$\\

Since $\alpha \notin U,$ we get $t_k\alpha\notin t_kU$ for all $k\in A.$
$$\{t_k\alpha:k\in A\}\nsubseteq t_lU ~\Rightarrow~ B\subseteq t_lU ~\mbox{for no}~l\in A,$$
which is a contradiction. Hence $\alpha=\theta$ and our result is established.
\end{proof}

Our next example proves that if $V$ be $\tau$-bounded then the set $W\mathcal{I}_\tau-LIM^{V}x$ may not be an order bounded set.


 \begin{example} Let $\mathcal{I}$ is the ideal of subsets of $\mathbb{N}$ of natural density zero and $L$ be the space of all Lebesgue measurable functions on $I=[0,1]$ with the usual point-wise ordering, i.e., for $x,y\in L,$ we define $x\leq y$ if and only if $x(t)\leq y(t)$ for every $t\in I.$ Consider the map $||.||:L\rightarrow \mathbb{R}$ defined by $||x||=( {\int_{I}}  x^2(s)ds)^\frac{1}{2},$ where $x\in L.$ Then $(L,\tau)$ forms a locally solid Riesz space. Put $V=\{x\in L :||x||\leq 1\}.$  Again we consider the weighted sequence $t=\{t_{n}\}_{n\in \mathbb{N}}$ and  the sequence $x=\{x_{n}\}_{n\in \mathbb{N}}$ in the following manner:
                  \begin{equation*}
     t_{n}= \begin{cases} 2+\frac{1}{n}  &\mbox {if} ~ n\neq m^{2}~ \mbox{for all}~ m\in \mathbb{N}, \\
          3  &\mbox{otherwise},
          \end{cases}
          ~\mbox{and}~
     x_{n}(s)=\begin{cases} \frac{1}{n}  &\mbox {if} ~ n\neq m^{2}~ \mbox{for all}~ m\in \mathbb{N}, \\
          1 &\mbox{otherwise},
          \end{cases}
          \end{equation*}
 for all $s\in I,$ $n\in \mathbb{N}.$   Thus it follows that $W\mathcal{I}_\tau-LIM^{V}x=\frac{1}{2}V$ is $\tau$-bounded but not order bounded. $\Box$ \end{example}


\begin{theorem} Let $(L,\tau)$ be a Hausdorff locally solid Riesz space and $V\in \mathcal{N}_{sol}$ such that $V+V+U\in \mathcal{N}_{sol}$ for all $U\in \mathcal{N}_{sol}.$ For a sequence $x=\{x_{n}\}_{n\in \mathbb{N}}$ in $L,$ we have  $$x_*-y_*\in \frac{\overline{V+V}}{\displaystyle{\inf_{n\in \mathbb{N}}} t_n}  ~\mbox{for all}~ x_*,y_*\in W\mathcal{I}_\tau-LIM^{V}x.$$
In addition, if $V$ is $\tau$-bounded then $W\mathcal{I}_\tau-LIM^{V}x$ is a $\tau$-bounded set. \end{theorem}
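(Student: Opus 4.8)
The plan is to prove the displayed containment first and then read off $\tau$-boundedness. Write $\delta:=\inf_{n\in\mathbb{N}}t_n$, which is a positive real number since $\{t_n\}$ is a weighted sequence; it suffices to show $\delta(x_*-y_*)\in\overline{V+V}$ for all $x_*,y_*\in W\mathcal{I}_\tau-LIM^{V}x$. Fix such $x_*,y_*$ and let $U\in\mathcal{N}_{sol}$ be arbitrary; choose $W\in\mathcal{N}_{sol}$ with $W+W\subseteq U$. By Definition 1.8 the sets $A=\{k:t_k(x_k-x_*)\in V+W\}$ and $B=\{k:t_k(x_k-y_*)\in V+W\}$ both belong to $\mathcal{F}(\mathcal{I})$, so $A\cap B\in\mathcal{F}(\mathcal{I})$ and in particular $A\cap B\neq\varnothing$ since $\mathcal{I}$ is a proper ideal. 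Picking any $k\in A\cap B$ gives
\[
t_k(x_*-y_*)=t_k(x_*-x_k)+t_k(x_k-y_*)\in(V+W)+(V+W)=V+V+W+W\subseteq V+V+U .
\]

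Next I would scale down using that $t_k\ge\delta$. Since $V$ and $U$ are balanced, $V+V+U$ is balanced (it is in fact the hypothesised member of $\mathcal{N}_{sol}$), and $\delta/t_k\in(0,1]$, hence $\delta(x_*-y_*)=(\delta/t_k)\,t_k(x_*-y_*)\in V+V+U$. As $U\in\mathcal{N}_{sol}$ was arbitrary and $\mathcal{N}_{sol}$ is a base of symmetric neighbourhoods of $\theta$, the standard topological–vector–space identity $\overline{S}=\bigcap_{U\in\mathcal{N}_{sol}}(S+U)$, applied to $S=V+V$, yields $\delta(x_*-y_*)\in\overline{V+V}$, i.e. $x_*-y_*\in\overline{V+V}/\delta=\overline{V+V}\big/\inf_{n\in\mathbb{N}}t_n$, as claimed.

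For the additional statement, if $W\mathcal{I}_\tau-LIM^{V}x=\varnothing$ there is nothing to prove; otherwise fix $y_*$ in it, so the first part gives $W\mathcal{I}_\tau-LIM^{V}x\subseteq y_*+\overline{V+V}/\delta$. If $V$ is $\tau$-bounded then $V+V$ is $\tau$-bounded (given a neighbourhood $U$, pick solid $W$ with $W+W\subseteq U$, absorb $V$ into $\lambda W$ and use balancedness to get $V+V\subseteq\lambda(W+W)\subseteq\lambda U$), hence so is its closure $\overline{V+V}$, hence so is the dilation $\overline{V+V}/\delta$ and the translate $y_*+\overline{V+V}/\delta$; a subset of a $\tau$-bounded set is $\tau$-bounded, so $W\mathcal{I}_\tau-LIM^{V}x$ is $\tau$-bounded.

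I expect the main obstacle to lie not in the ideal-theoretic bookkeeping but in the clean handling of the topological-group facts: namely the identity $\overline{V+V}=\bigcap_{U\in\mathcal{N}_{sol}}(V+V+U)$, the stability of $\tau$-boundedness under closure, dilation and translation, and — the one genuinely load-bearing move — the passage from the unwanted factor $t_k$ to the uniform factor $\delta$, which works precisely because $t_k\ge\delta$ and $V+V+U$ is balanced. Everything else is a routine assembly of these ingredients with the two filter sets $A$ and $B$.
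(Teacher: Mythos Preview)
Your proof is correct and follows essentially the same route as the paper's: intersect the two filter sets to find a common index $k$, get $t_k(x_*-y_*)\in V+V+U$, scale down to $\inf_n t_n$, and then pass to the closure via $\overline{V+V}=\bigcap_{U}(V+V+U)$; the second part is the same translate-plus-boundedness argument. The only cosmetic difference is that where the paper scales from $t_k$ to $\inf_n t_n$ using the \emph{solidity} of $V+V+U_0\in\mathcal{N}_{sol}$ (via $\inf_n t_n\,|x_*-y_*|\le t_k\,|x_*-y_*|$), you use its \emph{balancedness} (via $\delta/t_k\le 1$); both are immediate from the hypothesis $V+V+U\in\mathcal{N}_{sol}$.
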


\begin{proof} Let $U$ be an arbitrary $\tau$-neighborhood of zero. Then there exist  $U_0, U_1\in \mathcal{N}_{sol}$ such that $U_0\subseteq U$ and $U_1+U_1\subseteq U_0.$ Thus $K_1,K_2\in \mathcal{F}(\mathcal{I}),$ where $K_1=\{k\in \mathbb{N}: t_k(x_k-x_*)\in V+U_1\}$ and $K_2=\{k\in \mathbb{N}: t_k(x_k-y_*)\in V+U_1\}.$ Consequently $K_1\cap K_2\in \mathcal{F}(\mathcal{I}).$

  Moreover if $k\in K_1\cap K_2$ then $t_k(x_*-y_*)=t_k(x_*-x_k)+t_k(x_k-y_*)\in V+U_1+V+U_1\subseteq V+V+U_0. $  Thus we get  $t_k(x_*-y_*)\in V+V+U_0$ for all $k\in K_1\cap K_2.$ Also for $k\in K_1\cap K_2,$
  $$\displaystyle{\inf_{n\in \mathbb{N}}} t_n |(x_*-y_*)| \leq t_k |(x_*-y_*)|\in V+V+U_0.$$
  This implies $(x_*-y_*) \in \frac{V+V}{\displaystyle{\inf_{n\in \mathbb{N}}} t_n}+ \frac{U_0}{\displaystyle{\inf_{n\in \mathbb{N}}} t_n}\subseteq
  \frac{V+V}{\displaystyle{\inf_{n\in \mathbb{N}}} t_n}+ \frac{U}{\displaystyle{\inf_{n\in \mathbb{N}}} t_n}.$ Hence $(x_*-y_*) \in \frac{\overline{V+V}}{\displaystyle{\inf_{n\in \mathbb{N}}} t_n}$ since $(L,\tau)$ is Hausdorff and the intersection of all $\tau$-neighborhoods $U$ of zero is the singleton $\{\theta\}.$\\

  Now we proceed to the second part of the theorem. Let $W\mathcal{I}_\tau-LIM^{V}x\neq \varnothing$ and $x_*\in W\mathcal{I}_\tau-LIM^{V}x.$ From the above argument we get
  $W\mathcal{I}_\tau-LIM^{V}x\subseteq \frac{\overline{V+V}}{\displaystyle{\inf_{n\in \mathbb{N}}} t_n}+\{x_*\}.$  \end{proof}

\begin{remark} Naturally a question may arise that, does there exist any neighbourhood $V\in \mathcal{N}_{sol}$ in a Hausdorff locally solid Riesz space such that $V+V+U\in \mathcal{N}_{sol}$ for all $U\in \mathcal{N}_{sol}$ ?

For answering the above question we consider the Hausdorff locally solid Riesz space same as in Example 1 and define the neighbourhood $V\in \mathcal{N}_{sol}$ in the following manner: $V=U(1)\in \mathcal{N}_{sol}$ where $U(\varepsilon)=\{\alpha\in \mathbb{R}^2 :||\alpha||<\varepsilon\}$ for all $\varepsilon>0$ and it is clear that $V+V+U(\varepsilon)\in \mathcal{N}_{sol}$ for all $U(\varepsilon)\in \mathcal{N}_{sol}.$ \end{remark}

Further we discuss the closeness of rough weighted $\mathcal{I}_\tau$-limit set over locally solid Riesz spaces. Our next example assures that rough weighted $\mathcal{I}_\tau$-limit set is not closed.


 \begin{example} Consider the locally solid Riesz Space $\mathbb{R}^2$ and the ideal $\mathcal{I}$ as in Example 3. Let us define the sequence $x=\{x_{n}\}_{n\in \mathbb{N}}$ and the weighted sequence $t=\{t_{n}\}_{n\in \mathbb{N}}$ in the following manner;\\
    \begin{equation*}
     x_{n}=\begin{cases} (1,-\frac{2}{n})  &\mbox {if} ~ n\neq m^{2}~ \mbox{for all}~ m\in \mathbb{N}, \\
          (n,-n)&\mbox{otherwise}
          \end{cases}
          ~\mbox{and} ~ t_{n}= n~ \mbox{for all}~ n\in \mathbb{N}.
          \end{equation*}

    Let $V=\{(\xi,\eta)\in \mathbb{R}^2 :-1<\xi<1, \eta\geq -1\}.$ Therefore $W\mathcal{I}_\tau-LIM^{V}x=\{(\xi,\eta)\in \mathbb{R}^2 :\xi=1,\eta<0\}$ is not closed. In fact it is not an open set. $\Box$ \end{example}

A careful inspection of the previous examples exhibit how the set $W\mathcal{I}_\tau-LIM^{V}x$ could be generalized. We do so in the next theorem.


 \begin{theorem} Let $(L,\tau)$ be a Hausdorff locally solid Riesz space. Hence we assert
 \begin{equation*}
     W\mathcal{I}_\tau-LIM^{V}x \begin{cases} \mbox{is closed}; ~\mbox{if weighted sequence be $\mathcal{I}$-bounded}, \\
          \mbox{is closed};~\mbox{if weighted sequence is not $\mathcal{I}$-bounded and} ~V ~\mbox{be} ~\tau\mbox{-bounded},\\
          \mbox{has no definite conclusion};~\mbox{if weighted sequence is not $\mathcal{I}$-bounded and}\\
             V ~\mbox{be} ~\tau\mbox{-unbounded}.\\
             \end{cases}
          \end{equation*}
          \end{theorem}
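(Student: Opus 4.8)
The plan is to establish the three cases separately, leaning on results already in hand. For the first two cases the aim is to show $W\mathcal{I}_\tau-LIM^{V}x$ is closed, which I would do by a direct "limit of a convergent net/sequence stays in the set" argument, exactly the kind of argument that underlies the classical Fridy and Das closedness theorems. First I would fix a point $y$ in the $\tau$-closure of $W\mathcal{I}_\tau-LIM^{V}x$ and let $U$ be an arbitrary $\tau$-neighborhood of $\theta$. Choosing $W\in\mathcal{N}_{sol}$ with $W+W\subseteq U$, by definition of closure there is an $x_*\in W\mathcal{I}_\tau-LIM^{V}x$ with $x_*-y\in W$ (after rescaling by the weights, which is where the two cases diverge). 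Then for each $n$ in the filter set $A=\{n\in\mathbb{N}:t_n(x_n-x_*)\in V+W\}\in\mathcal{F}(\mathcal{I})$ one gets
\begin{equation*}
t_n(x_n-y)=t_n(x_n-x_*)+t_n(x_*-y)\in (V+W)+t_nW\subseteq V+W+W\subseteq V+U,
\end{equation*}
provided $t_nW\subseteq W$; since $W$ is balanced this holds whenever $|t_n|\le 1$, but in general the weights need not be bounded above, so the correct move is to use $\mathcal{I}$-boundedness of $\{t_n\}$ (case one) to restrict to a filter set on which $t_n\le M$ and absorb the factor, or, in case two, to use $\tau$-boundedness of $V$ together with Theorem 2.6, which pins $x_*-y$ into $\frac{\overline{V+V}}{\inf_n t_n}$ and forces $W\mathcal{I}_\tau-LIM^{V}x$ to be a translate of a fixed $\tau$-bounded (hence, in the Hausdorff setting, closed-after-closure) set around its unique candidate limit. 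In both subcases the conclusion is $y\in W\mathcal{I}_\tau-LIM^{V}x$.

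For case two more care is warranted because the weighted sequence is unbounded: here I would invoke Theorem 2.4, which says $W\mathcal{I}_\tau-LIM^{V}x$ has at most one element when $\{t_n\}$ is not $\mathcal{I}$-bounded and $V$ is $\tau$-bounded. A set with at most one point in a Hausdorff space is automatically closed, so this case is immediate once Theorem 2.4 is quoted — no net argument is even needed. I would state this explicitly as the short route.

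For the third case — $\{t_n\}$ not $\mathcal{I}$-bounded and $V$ being $\tau$-unbounded — the claim is only that there is "no definite conclusion," so the task is purely to exhibit two examples with opposite behavior. Example 6 in the excerpt already supplies a situation of this type where $W\mathcal{I}_\tau-LIM^{V}x=\{(\xi,\eta):\xi=1,\eta<0\}$ is \emph{not} closed; for the complementary half I would produce (or point to) an instance — along the lines of Example 4, where $V=\{(\xi,\eta):-1<\eta<1\}$ is $\tau$-unbounded, $t_n=n$ is not $\mathcal{I}$-bounded, and $W\mathcal{I}_\tau-LIM^{V}x=\{(\xi,\eta):\eta=0\}$ is closed — to show the closed outcome also occurs. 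Citing these two examples settles the "has no definite conclusion" row.

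The main obstacle will be the weight-absorption step in case one: unlike the convexity proof (Theorem 2.1), where balancedness gives $\lambda W\subseteq W$ for $\lambda\in[0,1]$ for free, here the factor $t_n$ can exceed $1$, so one must genuinely use $\mathcal{I}$-boundedness to pass to a filter subset $\{n:t_n\le M\}$ and then rescale the neighborhood $W$ down by $M$ at the start (pick $W$ with $t_nW\subseteq W'$ where $W'+W'\subseteq U$ on that subset). Handling this interchange of quantifiers — choosing the neighborhood \emph{after} knowing the bound $M$ — cleanly is the delicate point; everything else is a routine combination of the filter intersection property and the $\mathcal{N}_{sol}$ axioms (a), (b), (c).
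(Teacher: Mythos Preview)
Your proposal is correct and follows essentially the same three-case split as the paper: Case~1 via weight-absorption using $\mathcal{I}$-boundedness, Case~2 by quoting the ``at most one point'' theorem (the paper's Theorem~2.2), and Case~3 by pointing to the examples (the paper cites Examples~1, 2 and~5). Two small remarks: your closure argument in Case~1, which picks a single $x_*$ near $y$ rather than a convergent sequence, is actually cleaner than the paper's version, which invokes first countability of $L$ to produce a sequence $p_n\to p_*$; and your worry about an ``interchange of quantifiers'' is misplaced, since the bound $M$ comes from $\mathcal{I}$-boundedness alone and is fixed \emph{before} $U$ is chosen --- the paper simply uses solidity of $U_0\in\mathcal{N}_{sol}$ to pass from $M(p_{k_0}-p_*)\in U_0$ and $t_k<M$ directly to $t_k(p_{k_0}-p_*)\in U_0$, which is the crisp way to do the absorption you describe.
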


\begin{proof} \emph{\textbf{Case 1:}} Let the weighted sequence $t=\{t_n\}_{n\in \mathbb{N}}$ be $\mathcal{I}$-bounded. In this case there exists a positive real number $M$ such that $K_1=\{k\in \mathbb{N}: t_k<M\}\in \mathcal{F}(\mathcal{I}).$ Assume $p_*\in \overline{W\mathcal{I}_\tau-LIM^{V}x}.$ Then there exists a sequence  $\{p_n\}_{n\in \mathbb{N}}$ in $W\mathcal{I}_\tau-LIM^{V}x$ such that $p_n\rightarrow p_*$ as $n\rightarrow \infty$ (by using 1st countable property of L). Naturally  $M p_n\rightarrow M p_*$ as $n\rightarrow \infty.$

Again we consider $U$ is any neighborhood of $\theta$ and corresponding to $U$ there exists a $U_0\in \mathcal{N}_{sol}$ such that $U_0+U_0\subset U.$ Hence  $M(p_n-p_*)\in U_0$ for all $n\geq k_0,$ where $k_0$ is a positive integer depends on $U_0.$

On the other hand if $k\in K_1$ then $t_k<M$ implies $t_k|p_{k_0}-p_*|\leq M|p_{k_0}-p_*|$ and finally $|t_k(p_{k_0}-p_*)|\leq |M(p_{k_0}-p_*)|.$

Since $M(p_{k_0}-p_*)\in U_0$ and $U_0$ is a solid set so $t_k(p_{k_0}-p_*)\in U_0$ for all $k\in K_1.$\\

We define the set $K_2=\{k\in \mathbb{N}: t_k(x_k-p_{k_0})\in V+U_0\}.$  Then $p_{k_0}\in W\mathcal{I}_\tau-LIM^{V}x$ follows $K_2\in \mathcal{F}(\mathcal{I}).$
For $k\in K_1\cap K_2,$ $t_k(x_k-p_*)=t_k(x_k-p_{k_0})+t_k(p_{k_0}-p_*)\in V+U_0+U_0\subset V+U.$

This implies $K_1\cap K_2 \subset \{k\in \mathbb{N}: t_k(x_{k}-p_*)\in V+U\}.$ Clearly $\{k\in \mathbb{N}: t_k(x_{k}-p_*)\in V+U\}\in \mathcal{F}(\mathcal{I}).$ So we conclude case 1.\\

\emph{\textbf{Case 2:}} If the weighted sequence $t=\{t_n\}_{n\in \mathbb{N}}$ is not $\mathcal{I}$-bounded and $V$ is $\tau$-bounded then from Theorem 2.2, the set $W\mathcal{I}_\tau-LIM^{V}x$ becomes either singleton or empty. Hence it is closed.\\

\emph{\textbf{Case 3:}} From Examples 1, 2 and 5 it is clear that the the set $W\mathcal{I}_\tau-LIM^{V}x$ neither open nor closed.\end{proof}

We initiate this section with the definitions of weighted $\tau$-boundedness and weighted $\mathcal{I}\tau$-boundedness over a locally solid Riesz space $L.$


\begin{definition} Let $(L,\tau)$ be a locally solid Riesz space and $t=\{t_{n}\}_{n\in \mathbb{N}}$ is a weighted sequence. A sequence $x=\{x_{n}\}_{n\in \mathbb{N}}$ in $L$ is said to be weighted $\tau$-bounded if for every $\tau$-neighborhood $U$ of $\theta$ there exists some $\lambda>0$ such that  $\lambda t_kx_k\notin U$  at most for finitely many $k.$ \end{definition}


 \begin{definition} Let $(L,\tau)$ be a locally solid Riesz space and $t=\{t_{n}\}_{n\in \mathbb{N}}$ is a weighted sequence. A sequence $x=\{x_{n}\}_{n\in \mathbb{N}}$ in $L$ is said to be weighted $\mathcal{I}_\tau$-bounded if for every $\tau$-neighborhood $U$ of $\theta$ there exists some $\lambda>0$ such that  $\{k\in \mathbb{N}: \lambda t_kx_k\notin U\}\in\mathcal{I}.$ \end{definition}


\begin{theorem} If a sequence $x=\{x_{n}\}_{n\in \mathbb{N}}$ be weighted $\mathcal{I}_{\tau}$-bounded then for every $\tau$-neighborhood $V$ of $\theta$ there exists a positive real number $\mu$ such that $W\mathcal{I}_{\tau}-LIM^{\mu V}x\neq \varnothing.$ \end{theorem}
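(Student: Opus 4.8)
The plan is to extract from the weighted $\mathcal{I}_\tau$-boundedness of $x$ a single scaling constant $\lambda$ that places almost all terms $\lambda t_k x_k$ inside a controlled neighborhood, and then to absorb this into a dilated copy $\mu V$ of the prescribed neighborhood $V$, concluding that $\theta$ (or some explicit point) lies in $W\mathcal{I}_\tau-LIM^{\mu V}x$. First I would fix the given $\tau$-neighborhood $V$ of $\theta$. Since $V$ is a neighborhood of zero, by the base property there is $W\in\mathcal{N}_{sol}$ with $W+W\subseteq V$ (and I may shrink further if a second splitting is needed). Applying the definition of weighted $\mathcal{I}_\tau$-boundedness to the neighborhood $W$, I obtain a $\lambda>0$ with $A:=\{k\in\mathbb{N}:\lambda t_k x_k\notin W\}\in\mathcal{I}$, so that $\mathbb{N}\setminus A\in\mathcal{F}(\mathcal{I})$ and $\lambda t_k x_k\in W$ for all $k\notin A$.

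Next I would choose $\mu$. The idea is that $\lambda t_k x_k\in W$ for cofinally many $k$ means $t_k x_k\in \tfrac{1}{\lambda}W$ for those $k$; since $W$ is balanced, if $\lambda\ge 1$ then $\tfrac{1}{\lambda}W\subseteq W\subseteq V$, while if $\lambda<1$ we need to inflate, and the natural choice is $\mu=\max\{1,\tfrac{1}{\lambda}\}$ (or simply $\mu\ge \tfrac1\lambda$), using that $\tfrac1\lambda W\subseteq \tfrac1\lambda V\subseteq \mu V$ — here one must be slightly careful, because $V$ itself need not be balanced, so it is cleaner to run the whole argument with $W$ rather than $V$: pick $\mu$ so that $\tfrac1\lambda W\subseteq \mu W$, which holds for $\mu\ge\tfrac1\lambda$ by balancedness of $W$. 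Then I claim $x_n\xrightarrow[\mu V]{W\mathcal{I}}\theta$. To verify this, take an arbitrary $\tau$-neighborhood $U$ of $\theta$; I must show $\{k\in\mathbb{N}: t_k(x_k-\theta)\notin \mu V+U\}\in\mathcal{I}$. But for $k\notin A$ we have $t_k x_k\in\tfrac1\lambda W\subseteq\mu W\subseteq\mu V\subseteq \mu V+U$ (the last inclusion because $U$ is a neighborhood of $\theta$, hence contains $\theta$; more precisely $\mu V\subseteq \mu V + U$ since $\theta\in U$). Therefore $\{k: t_k x_k\notin\mu V+U\}\subseteq A\in\mathcal{I}$, which gives $\theta\in W\mathcal{I}_\tau-LIM^{\mu V}x$, and in particular the set is nonempty.

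The main obstacle I anticipate is the interaction between the balancedness (available for members of $\mathcal{N}_{sol}$) and the fact that the \emph{prescribed} $V$ in the statement is only assumed to be a $\tau$-neighborhood of $\theta$, not necessarily solid or balanced. The clean fix is the one indicated above: work with an inner solid neighborhood $W\subseteq V$ from the base $\mathcal{N}_{sol}$, carry out all the dilation/absorption estimates with $W$ (where $\lambda W\subseteq W$ for $|\lambda|\le 1$ is legitimate), and only at the very end use $\mu W\subseteq \mu V$. A secondary minor point is the choice of $\mu$ in the degenerate-looking case $\lambda\ge 1$, where one can simply take $\mu=1$ since then $\tfrac1\lambda W\subseteq W\subseteq V$; unifying both cases as $\mu=\max\{1,\lambda^{-1}\}$ keeps the statement clean. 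No use of the Hausdorff property or of $\tau$-boundedness of $V$ is needed here, which is consistent with the generality of the claim.
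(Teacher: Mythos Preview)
Your proposal is correct and follows essentially the same route as the paper: show that $\theta\in W\mathcal{I}_\tau\text{-}LIM^{\mu V}x$ by using the $\lambda$ coming from weighted $\mathcal{I}_\tau$-boundedness. The paper's argument is more direct, however: it applies the boundedness definition to $V$ itself (not to an inner $W\in\mathcal{N}_{sol}$) and simply sets $\mu=\lambda^{-1}$, so that $\lambda t_kx_k\in V$ is literally the same as $t_kx_k\in\mu V\subseteq\mu V+U$; no balancedness is ever needed because the passage $\lambda y\in V\Leftrightarrow y\in\lambda^{-1}V$ is valid for any set $V$ and any $\lambda>0$, making your detour through $W$ and the choice $\mu=\max\{1,\lambda^{-1}\}$ unnecessary.
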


\begin{proof} Since $x=\{x_{n}\}_{n\in \mathbb{N}}$ is weighted $\mathcal{I}_\tau$-bounded then for every $\tau$-neighborhood $V$ of $\theta$ there exists some $\lambda>0$ such that  $$\{k\in \mathbb{N}: \lambda t_kx_k\in V\}=\{k\in \mathbb{N}: \lambda t_k(x_k-\theta)\in V\}\in \mathcal{F}(\mathcal{I}).$$

Therefore $\{k\in \mathbb{N}: t_k(x_k-\theta)\in \frac{1}{\lambda}V+U\}\in \mathcal{F}(\mathcal{I}),$ where $U$ is an arbitrary $\tau$-neighborhood of $\theta.$
 Setting $\mu=\frac{1}{\lambda}.$ Hence the set $W\mathcal{I}_{\tau}-LIM^{\mu V}x$ contains the null element of $L.$ \end{proof}

 But the converse is not true. We choose an example to emphasis our assertion.



 \begin{example} Consider the ideal $\mathcal{I}$ and the locally solid Riesz Space $\mathbb{R}^2$ as defined in Example 3.  Let us define the sequence $x=\{x_{n}\}_{n\in \mathbb{N}}$ and the weighted sequence $t=\{t_{n}\}_{n\in \mathbb{N}}$ in the respective order:
    \begin{equation*}
     x_{n}=\begin{cases} (2+\frac{1}{n},1+\frac{3}{n})  &\mbox {if} ~ n\neq m^{2}~ \mbox{for all}~ m\in \mathbb{N}, \\
          (5,-5) &\mbox{otherwise}
          \end{cases}
         ~\mbox{and}~~
     t_{n}= \sqrt{n}~ \mbox{for all}~ n\in \mathbb{N}.
          \end{equation*}

   If $V$ be any arbitrary $\tau$-neighborhood of $\theta$ and $\mu$ be any positive real number then obviously $(2,1)\in W\mathcal{I}_\tau-LIM^{\mu V}x$ and so $W\mathcal{I}_\tau-LIM^{\mu V}x\neq \varnothing.$  For any positive real number $\lambda$ we get
     \begin{equation*}
     \lambda t_n x_{n}=\begin{cases} (2\lambda\sqrt{n}+\frac{1}{\sqrt{n}},\lambda \sqrt{n}+\frac{3}{\sqrt{n}})  &\mbox {if} ~ n\neq m^{2}~ \mbox{for all}~ m\in \mathbb{N}, \\
          (5\lambda \sqrt{n},-5\lambda \sqrt{n})&\mbox{otherwise}.
          \end{cases}
          \end{equation*}

 In this case $\{k\in \mathbb{N}:\lambda t_kx_k\notin V\}\notin \mathcal{I}$ Therefore the sequence $x=\{x_{n}\}_{n\in \mathbb{N}}$ is not weighted $\mathcal{I}_\tau$-bounded although $W\mathcal{I}_\tau-LIM^{\mu V}x\neq \varnothing.$  $\Box$ \end{example}

If we consider the sequence $x_{n}=(\frac{1}{n},0)$ for all $n\in \mathbb{N}$ in $\mathbb{R}^2$ as taken in Example 3, $t_n=n^2~\mbox{for all}~ n\in \mathbb{N}$ and $\mathcal{I}$ an arbitrary ideal. Hence the sequence $x=\{x_{n}\}_{n\in \mathbb{N}}$ is $\mathcal{I}_\tau$-bounded as well as $\tau$-bounded, but $W_{\mathcal{I}}\Gamma_x^\tau=\varnothing.$ \\

As an immediate consequence the following example shows that the set $W_{\mathcal{I}}\Gamma_x^\tau$ is empty even if the sequence $x=\{x_{n}\}_{n\in \mathbb{N}}$ is weighted $\mathcal{I}_\tau$-bounded.


 \begin{example} Consider the infinite dimensional normed space $l^2$ space with the norm
 $$||\alpha||=\left(\displaystyle{\sum_{j=1}^{\infty}}~|\xi_j|^2 \right)^{\frac{1}{2}}~\mbox{where}~\alpha=(\xi_1,\xi_2,...)\in l^2$$ with coordinatewise ordering. So $l^2$ is a locally solid Riesz Space. The family $\mathcal{N}_{sol}$ of all $U(\varepsilon)$ defined as $U(\varepsilon)=\{\alpha\in l^2 :||\alpha||<\varepsilon\}$ where $\varepsilon>0,$ constitutes a base at $\theta=(0,0,...).$ We consider the ideal $\mathcal{I}=\{K\subset \mathbb{N}:\displaystyle{\lim_{n\rightarrow \infty}}\frac{|K\cap \{1,2,3,...,n\}|}{n}=0\},$ the sequence  $\{x_{n}\}_{n\in \mathbb{N}}$ in $l^2$ such that
 $x_{n}=e_n, ~\mbox{where}~e_{n}~\mbox{has}~n^{{th}}~\mbox{term}~ 1~ \mbox{and other terms are}~ 0$
 and $t_n=2~\mbox{for all}~ n\in \mathbb{N}.$ Therefore the sequence $x=\{x_{n}\}_{n\in \mathbb{N}}$ is weighted $\mathcal{I}_\tau$-bounded as well as $\mathcal{I}_\tau$-bounded but $W_{\mathcal{I}}\Gamma_x^\tau=\varnothing.$ $\Box$ \end{example}

In the above example, if we reassume $t_{n}= n$ for all $n\in \mathbb{N}$ keeping the space $l^2$ and the sequence $x=\{x_{n}\}_{n\in \mathbb{N}}$ unaltered, the sequence $x=\{x_{n}\}_{n\in \mathbb{N}}$ is $\mathcal{I}_\tau$-bounded as well as  $\tau$-bounded but $W_{\mathcal{I}}\Gamma_x^\tau=\varnothing.$\\

On the other hand, another important question may arise that  does the set $W_{\mathcal{I}}\Gamma_x^\tau$ is compact if the space is infinite dimensional? Answer is no. We sketch an important example below to answer this question.


 \begin{example}
 Let $\mathbb{N}= \displaystyle{\bigcup_{j=1}^\infty} \Delta_j$ where $\Delta_j=\{2^{j-1}(2s-1):s\in \mathbb{N}\}$ and $\mathcal{I}=\{A\subset \mathbb{N}: A\cap \Delta_j\neq \varnothing ~\mbox{for finitely many}~ j\}.$ Then $\mathcal{I}$ forms an admissible ideal. Consider another decomposition of $\mathbb{N},$ i.e.,  $D_r=\{p^{s}_r:s\in \mathbb{N}\},$ for all $r\in \mathbb{N}\setminus\{1\}, \{p_2<p_3<p_4,...\}$ is a sequence of distinct primes and $D_1=\mathbb{N}\setminus \displaystyle{\bigcup_{r=2}^\infty} D_r.$ Setting $t_n=n,$ if $n\in \mathbb{N}$ and the sequence $x=\{x_{n}\}_{n\in \mathbb{N}}$ such that $x_{n}=e_r, ~\mbox{for all}~n\in D_r$ (in the locally solid Riesz space $l^2$ as above).
 For each $0<\varepsilon<1$ and $r\in \mathbb{N},$ $\{k\in \mathbb{N}: t_{k}(x_{k}-e_r)\in U(\varepsilon)\}\notin \mathcal{I}.$ This shows that $e_r\in W_\mathcal{I}\Gamma_x^\tau$ for all $r\in \mathbb{N}.$  Let $A=\{e_1,e_2,e_3,...\}.$ Then, $A(\subset W_\mathcal{I}\Gamma_x^\tau)$ is closed but not compact. So the set $W_\mathcal{I}\Gamma_x^\tau$ is not compact.  $\Box$ \end{example}

   In \cite{F2} Fridy shown that for any number sequence $x=\{x_{n}\}_{n\in \mathbb{N}},$ the statistical cluster points set of $x$ is closed.
 Also in \cite{Das} Das had shown that in a topological space the $\mathcal{I}$-cluster points set is closed. But the following example shows that in general, the weighted $\mathcal{I}_\tau$-cluster points set in a locally solid Riesz space may not be closed.


 \begin{example}  Consider two decompositions of $\mathbb{N},$ as in previous Example 8 and the locally solid Riesz Space $\mathbb{R}^2$ same as Example 3.  Let us define the sequence $x=\{x_{n}\}_{n\in \mathbb{N}}$ and the weighted sequence $t=\{t_{n}\}_{n\in \mathbb{N}}$  in the following manner,
        $x_{k}=(\frac{1}{j}+\frac{1}{k^2},0)~\mbox{for all}~ ~k\in D_j~(\mbox{where}~j=1,2,3,...)$ and $t_{k}= k~ \mbox{for all}~ k\in \mathbb{N}.$

             Let $U$ be a $\tau$-neighborhood of $\theta,$ so there exists some $U(\varepsilon)\in \mathcal{N}_{sol},$ $\varepsilon>0$ such that $U(\varepsilon)\subset U.$ Then for each $j\in \mathbb{N},$ we get
  $\{k\in \mathbb{N}: {t_k}(x_k-(\frac{1}{j},0))\in U(\varepsilon)\}\notin \mathcal{I}.$  This shows that $(\frac{1}{j},0)\in W_\mathcal{I}\Gamma_x^\tau$ for all $j\in \mathbb{N}.$ Next we assume $k\in \mathbb{N}$ then there exists an integer $j\in \mathbb{N}$ such that $k\in D_j$ for some $j\in \mathbb{N}.$ If $k\in D_j$ for some $j\in \mathbb{N}\setminus\{1\},$ then $k$ is of the form $k=p^{s}_j$ where $s\in \mathbb{N}$ then
$$t_k(x_k-\theta)=p^{s}_j\left(\frac{1}{j}+\frac{1}{p^{2s}_j},0\right)=\left(\frac{p^{s}_j}{j}+\frac{1}{p^{s}_j},0\right)\geq (1,0).$$
Then  $\{k\in \mathbb{N}: {t_k}(x_k-\theta)\in U(\alpha)\}\in \mathcal{I},$ where $0<\alpha<1.$ This implies $\theta\notin W_\mathcal{I}\Gamma_x^\tau.$ On the other hand if $k\in D_1,$ then $\{k\in \mathbb{N}: {t_k}(x_k-\theta)\in U(1)\}\in \mathcal{I}.$ Therefore the set $W_\mathcal{I}\Gamma_x^\tau$ is not closed. \end{example}


\begin{theorem} For a sequence $x=\{x_{n}\}_{n\in \mathbb{N}},$ the weighted~ $\mathcal{I_{\tau}}$-cluster points set $W_\mathcal{I}\Gamma_x^\tau$ is closed if the weighted sequence $\{t_{n}\}_{n\in \mathbb{N}}$ is $\mathcal{I}$-bounded.
\end{theorem}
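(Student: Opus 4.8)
The plan is to show that the complement of $W_\mathcal{I}\Gamma_x^\tau$ is open, equivalently that if $c\notin W_\mathcal{I}\Gamma_x^\tau$ then some whole $\tau$-neighborhood of $c$ misses $W_\mathcal{I}\Gamma_x^\tau$. Since $t=\{t_n\}$ is $\mathcal{I}$-bounded, fix $M>0$ with $K_1=\{k\in\mathbb{N}: t_k< M\}\in\mathcal{F}(\mathcal{I})$; this is the same device used in Case 1 of the preceding theorem and it is what lets us transfer estimates on $M(\cdot)$ to estimates on $t_k(\cdot)$ via solidity. By definition, $c\notin W_\mathcal{I}\Gamma_x^\tau$ means there is a $\tau$-neighborhood $U_0$ of $\theta$ with $\{k\in\mathbb{N}: t_k(x_k-c)\in U_0\}\in\mathcal{I}$, i.e.\ $\{k: t_k(x_k-c)\notin U_0\}\in\mathcal{F}(\mathcal{I})$.

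First I would split $U_0$: choose $U_1\in\mathcal{N}_{sol}$ with $U_1+U_1\subseteq U_0$, and then let $G = \tfrac{1}{M}U_1$, which is a $\tau$-neighborhood of $\theta$ (scalar multiplication is continuous), so that $c+G$ is a neighborhood of $c$. The claim is that $(c+G)\cap W_\mathcal{I}\Gamma_x^\tau=\varnothing$. So suppose $d\in c+G$, i.e.\ $d-c\in G$; I want to produce a $\tau$-neighborhood $W$ of $\theta$ with $\{k: t_k(x_k-d)\in W\}\in\mathcal{I}$, which shows $d\notin W_\mathcal{I}\Gamma_x^\tau$. Take $W=U_1$. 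For $k\in K_1$ we have $t_k<M$, hence $|t_k(d-c)|\le |M(d-c)|$ and since $M(d-c)\in M G = U_1$ and $U_1$ is solid, $t_k(d-c)\in U_1$ for all $k\in K_1$. Now for $k\in K_1$ with additionally $t_k(x_k-c)\notin U_0$: if it were the case that $t_k(x_k-d)\in U_1$, then $t_k(x_k-c)=t_k(x_k-d)+t_k(d-c)\in U_1+U_1\subseteq U_0$, a contradiction. Hence $\{k\in\mathbb{N}: t_k(x_k-d)\in U_1\}$ is disjoint from $K_1\cap\{k: t_k(x_k-c)\notin U_0\}$, and this latter set lies in $\mathcal{F}(\mathcal{I})$ as an intersection of two $\mathcal{F}(\mathcal{I})$-sets; therefore $\{k: t_k(x_k-d)\in U_1\}\in\mathcal{I}$, so $d\notin W_\mathcal{I}\Gamma_x^\tau$.

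This establishes $(c+G)\subseteq L\setminus W_\mathcal{I}\Gamma_x^\tau$, so the complement is open and $W_\mathcal{I}\Gamma_x^\tau$ is closed. The one point requiring care — and the step I expect to be the main obstacle — is the passage from ``$M(d-c)\in U_1$'' to ``$t_k(d-c)\in U_1$ for $k\in K_1$'': this is exactly where the $\mathcal{I}$-boundedness of the weights and the solidity of the base neighborhoods combine, via $|t_k(d-c)|\le |M(d-c)|$ for $t_k<M$ together with the defining property of solid sets ($|u|\le|v|,\ v\in U_1\Rightarrow u\in U_1$). Apart from that, the argument is just a relabeling of the neighborhood-splitting bookkeeping already appearing in the proof of Theorem 2.5, Case 1; no use of the Hausdorff hypothesis is needed here, only that $(L,\tau)$ is a locally solid Riesz space. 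One should also note the degenerate case $W_\mathcal{I}\Gamma_x^\tau=\varnothing$, which is trivially closed and needs no argument.
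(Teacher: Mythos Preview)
Your proof is correct but proceeds by a different route than the paper. The paper argues via sequential closure: it takes a sequence $\{p_n\}$ in $W_\mathcal{I}\Gamma_x^\tau$ with $p_n\to p$, fixes an arbitrary neighborhood $U$ of $\theta$, splits $U_0+U_0\subset U$ with $U_0\in\mathcal{N}_{sol}$, and uses convergence to pick $k_0$ with $M(p_{k_0}-p)\in U_0$. Then from $p_{k_0}\in W_\mathcal{I}\Gamma_x^\tau$ it gets $B=\{n:t_n(x_n-p_{k_0})\in U_0\}\notin\mathcal{I}$, shows $A\cap B\notin\mathcal{I}$ (where $A=\{n:t_n<M\}$), and for $k\in A\cap B$ writes $t_k(x_k-p)=t_k(x_k-p_{k_0})+t_k(p_{k_0}-p)\in U_0+U_0\subset U$, yielding $p\in W_\mathcal{I}\Gamma_x^\tau$. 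You instead show the complement is open directly. The core mechanism --- $\mathcal{I}$-boundedness giving $\{t_k<M\}\in\mathcal{F}(\mathcal{I})$ together with solidity to pass from $M(\cdot)\in U_1$ to $t_k(\cdot)\in U_1$ --- is identical in both arguments; what differs is the wrapper. Your version has the advantage that it does not rely on the sequential characterization of closure and hence needs no first-countability assumption (which the paper invokes explicitly in the parallel Case~1 of the preceding theorem and tacitly here), so it is marginally more general. The paper's version is perhaps a shade more direct, since it works with a single point $p_{k_0}$ rather than an entire neighborhood $G$ of shifted points.
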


\begin{proof} As the weighted sequence $\{t_{n}\}_{n\in \mathbb{N}}$ is $\mathcal{I}$-bounded, there exists a non negative real number $M$ such that the set $A=\{n\in \mathbb{N}:t_n< M\}\in \mathcal{F(\mathcal{I})}.$
 Let $U$ is any neighborhood of $\theta$ and so there exists a $U_0\in \mathcal{N}_{sol}$ such that $U_0+U_0\subset U.$ Consider a sequence $\{p_n\}_{n\in \mathbb{N}}\in W_\mathcal{I}\Gamma_x^\tau$ such that $p_n\rightarrow p.$ Hence  $M(p_n-p)\in U_0$ for all $n\geq k_0,$ where $k_0$ is a positive integer depends on $U_0.$

 Also $p_{k_0}\in W_\mathcal{I}\Gamma_x^\tau$ implies $B=\{n\in \mathbb{N}:t_n(x_n-p_{k_0})\in U_0\}\notin \mathcal{I}.$ Therefore $A\cap B\notin \mathcal{I}$ otherwise $A^c\cup (A\cap B)\in \mathcal{I}$ (since $A\in \mathcal{F(\mathcal{I})}$) we end up with a contradiction that $B\in \mathcal{I}.$ For $k\in A\cap B,$ we have $t_k(x_k-p)=t_k(x_k-p_{k_0})+t_k(p_{k_0}-p)\in U_0+U_0\subset U.$ Thus $A\cap B\subseteq \{n\in \mathbb{N}:t_n(x_n-p)\in U\}$ demonstrates that $p\in W_\mathcal{I}\Gamma_x^\tau$ and the closeness of $W_\mathcal{I}\Gamma_x^\tau$ is established.
\end{proof}


\begin{theorem}  Let $(L,\tau)$ be a locally solid Riesz space, $x_{n}\xrightarrow{W_\mathcal{I}\Gamma_x^\tau} c$ and $x_{n}\xrightarrow{W_\mathcal{I}\Gamma_x^\tau} d.$ Then \\
$(i)$ $|x_{n}|\xrightarrow{W_\mathcal{I}\Gamma_x^\tau} |c|,$ $(ii)$ $x^+_{n}\xrightarrow{W_\mathcal{I}\Gamma_x^\tau} c^+,$ $(iii)$ $x^-_{n}\xrightarrow{W_\mathcal{I}\Gamma_x^\tau} c^-,$\\
$(iv)$ $x_{n}\not\xrightarrow{W_\mathcal{I}\Gamma_x^\tau} c \vee d,$ $(v)$ $x_{n}\not\xrightarrow{W_\mathcal{I}\Gamma_x^\tau} c\wedge d.$ \end{theorem}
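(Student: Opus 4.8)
The plan is to prove parts (i)--(iii) by a single solidity argument, and to establish (iv)--(v) by exhibiting an explicit counterexample, since those last two are genuinely negative statements: unlike $|\cdot|$, $(\cdot)^+$ and $(\cdot)^-$, the lattice operations $\vee$ and $\wedge$ do not carry weighted $\mathcal{I}_\tau$-cluster points to weighted $\mathcal{I}_\tau$-cluster points.

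For (i), fix an arbitrary $\tau$-neighborhood $U$ of $\theta$ and choose $W\in\mathcal{N}_{sol}$ with $W\subseteq U$, which is possible because $\mathcal{N}_{sol}$ is a base at $\theta$. Since $c$ is a weighted $\mathcal{I}_\tau$-cluster point of $x$ and $W$ is itself a $\tau$-neighborhood of $\theta$, the set $A=\{n\in\mathbb{N}:t_n(x_n-c)\in W\}$ is not in $\mathcal{I}$. Now I would invoke the Riesz-space inequality $\bigl||x_n|-|c|\bigr|\le|x_n-c|$ and multiply through by $t_n>0$ (legitimate by axiom (ii) of an ordered vector space) to get $\bigl|t_n(|x_n|-|c|)\bigr|\le|t_n(x_n-c)|$. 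Then for each $n\in A$ the solidity of $W$ forces $t_n(|x_n|-|c|)\in W\subseteq U$, so $\{n\in\mathbb{N}:t_n(|x_n|-|c|)\in U\}\supseteq A\notin\mathcal{I}$, which is precisely $|x_n|\xrightarrow{W_\mathcal{I}\Gamma_x^\tau}|c|$. Parts (ii) and (iii) follow by the same three lines, with the inequality replaced by $|x_n^+-c^+|\le|x_n-c|$ and $|x_n^--c^-|\le|x_n-c|$ respectively; both are consequences of $a^{\pm}=\tfrac12(|a|\pm a)$ together with the inequality used in (i).

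For (iv) and (v) I would work in the locally solid Riesz space $(\mathbb{R}^2,||\cdot||)$ of Example 1 (coordinate-wise ordering, base $\{U(\varepsilon)\}_{\varepsilon>0}$), take $\mathcal{I}$ the natural-density-zero ideal, $t_n=1$ for all $n$, and set $x_n=(1,0)$ for odd $n$ and $x_n=(0,1)$ for even $n$. Since $\{n:x_n=(1,0)\}$ and $\{n:x_n=(0,1)\}$ each have density $\tfrac12$, neither lies in $\mathcal{I}$, so $c=(1,0)$ and $d=(0,1)$ are weighted $\mathcal{I}_\tau$-cluster points of $x$. But $c\vee d=(1,1)$, and for every $\varepsilon\in(0,1)$ the vectors $t_n(x_n-(1,1))=x_n-(1,1)$ (which equal $(0,-1)$ or $(-1,0)$, both of norm $1$) lie outside $U(\varepsilon)$, so $\{n\in\mathbb{N}:t_n(x_n-(1,1))\in U(\varepsilon)\}=\varnothing\in\mathcal{I}$; hence $c\vee d$ is not a weighted $\mathcal{I}_\tau$-cluster point, i.e. $x_n\not\xrightarrow{W_\mathcal{I}\Gamma_x^\tau}c\vee d$. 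The same sequence disposes of (v): $c\wedge d=(0,0)$, $t_n(x_n-(0,0))=x_n\in\{(1,0),(0,1)\}$, and again the corresponding index set is empty for $\varepsilon\in(0,1)$.

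The routine content is (i)--(iii): once the correct elementary lattice inequalities are written down the whole thing reduces to solidity of the base neighborhoods, the only point requiring care being that the weight $t_n$ must be carried along, which is harmless since $t_n>0$. The real subtlety is in (iv) and (v): they admit no direct proof and must be read as the assertion that it \emph{need not} hold that $x_n\xrightarrow{W_\mathcal{I}\Gamma_x^\tau}c\vee d$ or $x_n\xrightarrow{W_\mathcal{I}\Gamma_x^\tau}c\wedge d$; moreover the counterexample is necessarily at least two-dimensional, because in $\mathbb{R}$ one has $c\vee d\in\{c,d\}$, so there the supremum of two cluster points would automatically remain a cluster point.
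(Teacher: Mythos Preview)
Your argument is correct and follows the same overall scheme as the paper: parts (i)--(iii) via the lattice inequalities $\bigl||a|-|b|\bigr|\le|a-b|$, $|a^\pm-b^\pm|\le|a-b|$ together with solidity of the base neighborhoods (the paper simply defers this to \cite{AP}, Theorem~4.2, while you spell it out), and parts (iv)--(v) via an explicit two-dimensional counterexample.

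The only genuine difference is in the counterexample chosen. The paper works in $(\mathbb{R}^2,\|\cdot\|_2)$ with the unbounded weights $t_n=n$, the ideal of Example~8, and the two-valued sequence taking $(1,-2)$ on $\{2^p:p\in\mathbb{N}\}$ and $(-1,2)$ elsewhere, obtaining $W_\mathcal{I}\Gamma_x^\tau=\{(1,-2),(-1,2)\}$ while $c\vee d=(1,2)$ and $c\wedge d=(-1,-2)$ fail to be cluster points. Your example is strictly simpler: constant weight $t_n\equiv1$, the density-zero ideal, and the alternation between $(1,0)$ and $(0,1)$. Both do the job; yours has the minor advantage of transparency, while the paper's choice with $t_n=n$ emphasizes that the phenomenon persists even for genuinely unbounded weights. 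Your closing remark that any counterexample must be at least two-dimensional (since in $\mathbb{R}$ one always has $c\vee d\in\{c,d\}$) is a worthwhile observation not made explicit in the paper.
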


\begin{proof}  Proof of $(i), (ii)$ and $(iii)$ are similar to proof of Theorem 4.2 \cite{AP}, so omitted.\\
For the proof of $(iv)$ and $(v)$ we consider the locally solid Riesz Space $\mathbb{R}^2$ as in Example 3.  Let us define the sequence $x=\{x_{n}\}_{n\in \mathbb{N}}$ and the weighted sequence $t=\{t_{n}\}_{n\in \mathbb{N}}$ in the manner;\\
    \begin{equation*}
     x_{n}=\begin{cases} (1,-2)  ~\mbox {if} ~ n\in \{2^p: p\in \mathbb{N}\}, \\
          (-1,2)~~ \mbox{otherwise}
          \end{cases}
          ~\mbox{and}~ t_{n}= n~ \mbox{for all}~ n\in \mathbb{N} ~\mbox{respectively}.\\
          \end{equation*}

    Consider the ideal $\mathcal{I}$ as in Example 8. Then the weighted $\mathcal{I}_\tau$-cluster points set of the sequence $x,$
     $W_\mathcal{I}\Gamma_x^\tau=\{(1,-2), (-1,2)\}.$
    So $x_{n}\xrightarrow{W_\mathcal{I}\Gamma_x^\tau} (1,-2)$ and $x_{n}\xrightarrow{W_\mathcal{I}\Gamma_x^\tau} (-1,2)$ but $(1,2), (-1,-2)\notin W_\mathcal{I}\Gamma_x^\tau.$ Hence the results. \end{proof}

    From the above Theorem 2.6 it is clear that the set $W_\mathcal{I}\Gamma_x^\tau$ may not be a convex set.\\

    We finally draw a significant relationship between the sets $W\mathcal{I}_{\tau}-LIM^Vx$ and $W_\mathcal{I}\Gamma_x^\tau.$


\begin{theorem} Let $(L,\tau)$ be a Hausdorff locally solid Riesz space and $V\in \mathcal{N}_{sol}$ such that $V+U\in \mathcal{N}_{sol}$ for all $U\in \mathcal{N}_{sol}.$ If  $x=\{x_{n}\}_{n\in \mathbb{N}}$ be a sequence in $L,$ then
$$x_*-c\in \frac{\overline{V}}{\displaystyle{\inf_{n\in \mathbb{N}}} t_n} ~\mbox{where}~x_*\in W\mathcal{I}_{\tau}-LIM^Vx ~\mbox{and}~ c\in W_\mathcal{I}\Gamma_x^\tau.$$
Moreover if $V$ is $\tau$-bounded, $V+V+U\in \mathcal{N}_{sol}$ for all $U\in \mathcal{N}_{sol}$ and  $W\mathcal{I}_{\tau}-LIM^Vx$ is non-empty then $W_\mathcal{I} \Gamma_x^\tau$ is $\tau$-bounded. \end{theorem}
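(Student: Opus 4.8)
The plan is to prove the first assertion by the same filter-intersection technique used in Theorems 2.1, 2.2 and 2.4. Fix $x_*\in W\mathcal{I}_\tau-LIM^Vx$ and $c\in W_\mathcal{I}\Gamma_x^\tau$, and let $U$ be an arbitrary $\tau$-neighborhood of $\theta$. Choose $U_0,U_1\in\mathcal{N}_{sol}$ with $U_0\subseteq U$ and $U_1+U_1\subseteq U_0$. Then $K_1=\{k\in\mathbb{N}: t_k(x_k-x_*)\in V+U_1\}\in\mathcal{F}(\mathcal{I})$ by definition of the rough limit set, while $K_2=\{k\in\mathbb{N}: t_k(x_k-c)\in U_1\}\notin\mathcal{I}$ by definition of a weighted $\mathcal{I}_\tau$-cluster point. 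Since $K_1\in\mathcal{F}(\mathcal{I})$ and $K_2\notin\mathcal{I}$, the set $K_1\cap K_2\notin\mathcal{I}$, so in particular it is non-empty (indeed infinite). For any $k\in K_1\cap K_2$ we have
\begin{equation*}
t_k(x_*-c)=t_k(x_*-x_k)+t_k(x_k-c)\in V+U_1+U_1\subseteq V+U_0,
\end{equation*}
and hence, using that $V+U_0\in\mathcal{N}_{sol}$ is solid together with $\inf_{n\in\mathbb{N}}t_n\,|x_*-c|\leq t_k|x_*-c|$, we obtain $\inf_{n\in\mathbb{N}}t_n\,(x_*-c)\in V+U_0\subseteq V+U$, i.e.\ $x_*-c\in\frac{V}{\inf_{n}t_n}+\frac{U}{\inf_{n}t_n}$. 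Since $U$ was arbitrary and $L$ is Hausdorff (so $\bigcap_U U=\{\theta\}$ and likewise after scaling), intersecting over all $U$ gives $x_*-c\in\frac{\overline{V}}{\inf_{n\in\mathbb{N}}t_n}$.

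For the second assertion, assume $V$ is $\tau$-bounded, $V+V+U\in\mathcal{N}_{sol}$ for all $U\in\mathcal{N}_{sol}$, and pick some $x_*\in W\mathcal{I}_\tau-LIM^Vx\neq\varnothing$. The key observation is that the first part forces $W_\mathcal{I}\Gamma_x^\tau\subseteq \{x_*\}-\frac{\overline{V}}{\inf_{n}t_n}$, so it suffices to show this translate of $\frac{\overline{V}}{\inf_n t_n}$ is $\tau$-bounded. Since $V$ is $\tau$-bounded and the closure of a $\tau$-bounded set in a topological vector space is $\tau$-bounded, $\overline{V}$ is $\tau$-bounded; scaling by the positive constant $\frac{1}{\inf_n t_n}$ and translating by $x_*$ preserves $\tau$-boundedness (translates and nonzero dilations of $\tau$-bounded sets are $\tau$-bounded). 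Hence $W_\mathcal{I}\Gamma_x^\tau$, being a subset of a $\tau$-bounded set, is itself $\tau$-bounded. (Alternatively, one can re-run the computation above with the hypothesis $V+V+U\in\mathcal{N}_{sol}$ to land $W_\mathcal{I}\Gamma_x^\tau$ inside $x_*+\frac{\overline{V+V}}{\inf_n t_n}$ in the style of Theorem 2.4, which is what that extra hypothesis is evidently there for, and then invoke $\tau$-boundedness of $\overline{V+V}$.)

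The main obstacle I anticipate is bookkeeping with the solidity hypotheses rather than anything conceptual: one must be careful that each set appearing after a sum, such as $V+U_0$ or $V+V+U_0$, really is a member of $\mathcal{N}_{sol}$ so that the inequality $\inf_n t_n\,|x_*-c|\leq t_k|x_*-c|$ can be upgraded to membership of $\inf_n t_n\,(x_*-c)$ in that set — this is exactly why the statement carries the side conditions $V+U\in\mathcal{N}_{sol}$ and $V+V+U\in\mathcal{N}_{sol}$. A second minor point is justifying that $K_1\cap K_2$ is non-empty: this is immediate because an element of $\mathcal{F}(\mathcal{I})$ meets every set not in $\mathcal{I}$ (its complement lies in $\mathcal{I}$, so if the intersection were empty then $K_2$ would be contained in that complement, forcing $K_2\in\mathcal{I}$, a contradiction). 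Everything else is the routine neighborhood-shrinking and Hausdorff-intersection argument already deployed repeatedly in Section 2.
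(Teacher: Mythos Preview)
Your proof of the first assertion is essentially identical to the paper's: the same choice of $U_0,U_1\in\mathcal{N}_{sol}$, the same sets $K_1\in\mathcal{F}(\mathcal{I})$ and $K_2\notin\mathcal{I}$, the same decomposition $t_k(x_*-c)=t_k(x_*-x_k)+t_k(x_k-c)\in V+U_0$, the same use of solidity of $V+U_0$ to pass from $t_k$ to $\inf_n t_n$, and the same Hausdorff intersection to land in $\overline{V}/\inf_n t_n$.

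For the second assertion your primary route is actually a bit more economical than the paper's. The paper argues via the inclusion $W_\mathcal{I}\Gamma_x^\tau\subseteq \frac{\overline{V}}{\inf_n t_n}+W\mathcal{I}_\tau\text{-}LIM^Vx$ and then invokes Theorem~2.3 (which is where the hypothesis $V+V+U\in\mathcal{N}_{sol}$ enters) to conclude that $W\mathcal{I}_\tau\text{-}LIM^Vx$ is $\tau$-bounded, hence so is the sum. You instead use the sharper containment $W_\mathcal{I}\Gamma_x^\tau\subseteq \{x_*\}-\frac{\overline{V}}{\inf_n t_n}$ coming directly from the first part, together with the elementary facts that closures, scalings and translates of $\tau$-bounded sets remain $\tau$-bounded; this bypasses Theorem~2.3 entirely and in fact does not use the extra hypothesis $V+V+U\in\mathcal{N}_{sol}$. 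Your parenthetical ``alternative'' is precisely the paper's argument, so you have both routes covered.
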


 \begin{proof} Let us consider $U$ be an arbitrary $\tau$-neighborhood of zero. Then there exist $U_0,U_1\in \mathcal{N}_{sol}$ such that $U_0\subseteq U$ and $U_1+U_1\subseteq U_0.$ We know that  $K_1\in \mathcal{F(\mathcal{I})}$ and $K_2\notin\mathcal{I},$ where $K_1=\{k\in \mathbb{N}: t_k(x_k-x_*)\in V+U_1\}$ and $K_2=\{k\in \mathbb{N}: t_k(x_k-c)\in U_1\}.$ So it follows that $K_1\cap K_2$ is nonempty and an infinite set.  Further $k\in K_1\cap K_2$ then $t_k(x_*-c)=t_k(x_*-x_k)+t_k(x_k-c)\in V+U_1+U_1\subseteq V+U_0. $  Thus we get  $t_k(x_*-c)\in V+U_0$ for all $k\in K_1\cap K_2.$ For $k\in K_1\cap K_2,$
  $$\displaystyle{\inf_{n\in \mathbb{N}}} t_n |(x_*-c)|\leq |\displaystyle{\inf_{k\in {K_1\cap K_2}}} t_k (x_*-c)| \leq  |t_k(x_*-c)|\in V+U_0.$$
  This implies $(x_*-c) \in \frac{V}{\displaystyle{\inf_{n\in \mathbb{N}}} t_n}+\frac{U_0}{\displaystyle{\inf_{n\in \mathbb{N}}} t_n}\subseteq \frac{V}{\displaystyle{\inf_{n\in \mathbb{N}}} t_n}+\frac{U}{\displaystyle{\inf_{n\in \mathbb{N}}} t_n}.$  Since $(L,\tau)$ is Hausdorff and the intersection of all $\tau$-neighborhoods $U$ of zero is the singleton set $\{\theta\}.$ This shows that $(x_*-c) \in \frac{\overline{V}}{\displaystyle{\inf_{n\in \mathbb{N}}} t_n}.$

 To start the second part of the theorem, we let $x_*\in W\mathcal{I}_{\tau}-LIM^Vx.$ Applying the above argument as well as Theorem 2.3, we get
  $W_\mathcal{I}\Gamma_x^\tau \left(\subseteq \frac{\overline{V}}{\displaystyle{\inf_{n\in \mathbb{N}}} t_n}+W\mathcal{I}_{\tau}-LIM^Vx \right)$ is $\tau$-bounded. Hence the results conclude.    \end{proof}

\begin{remark} In retrospect to Remark 1, if we consider $V=U(1)\in \mathcal{N}_{sol}$ where $U(\varepsilon)=\{\alpha\in \mathbb{R}^2 :||\alpha||<\varepsilon\}$ for all $\varepsilon>0$ prominently $V+U(\varepsilon), V+V+U(\varepsilon)\in \mathcal{N}_{sol}$ for all $U(\varepsilon)\in \mathcal{N}_{sol}.$ So the existence of such neighbourhood $V$ is guaranteed. \end{remark}

\begin{example} In view of the above Theorem 2.8 one would naturally like to seek an example for which $W_\mathcal{I}\Gamma_x^\tau$ will be $\tau$-unbounded, i.e., in general $W_\mathcal{I}\Gamma_x^\tau$ is not $\tau$-bounded. If we replace the sequence $x=\{x_{n}\}_{n\in \mathbb{N}}$ by the sequence $z=\{z_{n}\}_{n\in \mathbb{N}}$ where $z_n(t)=j$ for all $n\in \Delta_j$ (where $j=1,2,3,...$), for all $t\in I$ and $t_n=n$ for all $n\in \mathbb{N}$ as in Example 4 then we get $\{z_n:n\in \mathbb{N}\}\subset W_\mathcal{I}\Gamma_x^\tau$ where $\mathcal{I}$ is the ideal as in Example 8. \end{example}

\end{document}